\documentclass{amsart}
\usepackage{amsfonts}
\usepackage{mathrsfs}
\usepackage{graphicx}
\usepackage[abs]{overpic}
\usepackage{geometry}
\geometry{left=3cm,right=3cm,top=2.5cm,bottom=2.5cm}

\newtheorem{theorem}{Theorem}[section]

\newtheorem{proposition}[theorem]{Proposition}

\theoremstyle{definition}

\newtheorem{example}[theorem]{Example}

\newcommand{\R}{\mathbb{R}}
\newcommand{\Q}{\mathbb{Q}}
\newcommand{\Z}{\mathbb{Z}}
\newcommand{\tb}{\mathrm{tb}}
\newcommand{\tw}{\mathrm{tw}}
\newcommand{\rot}{\mathrm{rot}}
\newcommand{\st}{\mathrm{st}}
\newcommand{\Int}{\mathrm{Int}}

\theoremstyle{remark}

\numberwithin{equation}{section}



\begin{document}

\title{Legendrian torus knots in $S^1\times S^2$}

\author{Feifei Chen}
\address{School of Mathematical Sciences\\ Peking University\\ Beijing
100871, China}
\email{chernfeifei@163.com}

\author{Fan Ding}
\address{LMAM, School of Mathematical Sciences\\ Peking University\\ Beijing
100871, China}
\email{dingfan@math.pku.edu.cn}

\author{Youlin Li}
\address{Department of Mathematics \\Shanghai Jiao Tong University\\
Shanghai 200240, China}
\email{liyoulin@sjtu.edu.cn}





\begin{abstract}
We classify the Legendrian torus knots in $S^1\times S^2$ with its standard tight contact structure up to Legendrian isotopy.
\end{abstract}

\maketitle

\section{Introduction}

Considerable progress has been made towards the classification of
Legendrian knots and links in contact 3-manifolds. The
classification of Legendrian unknots (in any tight contact
3-manifolds) is due to Eliashberg and Fraser, cf. \cite{EF1} and
\cite{EF2}. The classification of Legendrian torus knots and the
figure eight knot in $S^3$ with its standard tight contact
structure is due to Etnyre and Honda \cite{EH}. For a general
introduction to this topic see \cite{Et}. Recently, Legendrian
twist knots are classified in \cite{ENV} and Legendrian cables of
positive torus knots are classified in \cite{ELT}. Legendrian
knots in contact 3-manifolds other than $S^3$ (with its standard
contact structure) are also studied. For example, Legendrian
linear curves in all tight contact structures on $T^3$ are
classified in \cite{Gh} and Legendrian torus knots in the $1$-jet
space $J^1(S^1)$ with its standard tight contact structure are
classified in \cite{DG1}. Legendrian rational unknots in lens
spaces are studied in \cite{BE} and \cite{GO}. Legendrian torus
knots in lens spaces are studied in \cite{On}. The purpose of the
present paper is to give a complete classification of Legendrian
torus knots in $S^1\times S^2$ with its standard tight contact
structure.

The standard tight contact structure $\xi_{\st}$ on $S^{1}\times S^{2}\subset S^{1}\times \R^3$
is given by $\ker (x_3\mathrm{d}\theta+x_1\mathrm{d}x_2-x_2\mathrm{d}x_1)$,
where $\theta$ denotes the $S^1$-coordinate and $(x_1,x_2,x_3)$ cartesian coordinates on $\R^3$.
Here we think of $S^1$ as $\R /2\pi \Z$. This is the unique positive tight contact structure on $S^1\times S^2$
up to isotopy; see \cite[Theorem 4.10.1]{Ge}. Moreover, $\xi_{\st}$ is trivial as an abstract real $2$-plane bundle.
Suppose $K$ is an oriented Legendrian knot in $(S^1\times S^2,\xi_{\st})$.
For any preassigned choice of nowhere zero vector field $v$ in $\xi_{\st}$
(up to homotopies through such vector fields) we can define the rotation number
$\rot_v(K)$ to be the signed number of times that the tangent vector field $\tau$ to $K$ rotates in $\xi_{\st}$
relative to $v$ as we travel once around $K$ in the direction specified by its orientation. Usually we omit $v$ in $\rot_v(K)$.

Let $T_0=\{(\theta,x_1,x_2,x_3)\in S^1\times S^2: x_3= 0\}$, then
$T_0$ is a Heegaard torus, i.e., the closures of the components of $(S^1\times S^2)\setminus T_0$
are two solid tori. A knot in $S^1\times S^2$ is called a torus knot if it is (smoothly) isotopic to a knot on $T_0$.
Consider the solid torus $V_0=\{(\theta,x_1,x_2,x_3)\in S^1\times S^2: x_3\ge 0\}$.
The curve on $T_0=\partial V_0$ given by $\theta=0$, oriented positively in the $x_1x_2$-plane,
is a meridian of $V_0$, and let $m_0\in H_1(T_0)$ denote the class of this meridian;
the curve given by $(x_1,x_2,x_3)=(1,0,0)$, oriented by the parameter $\theta$,
is a longitude, and let $l_0\in H_1(T_0)$ denote the class of this longitude.
Then $(m_0,l_0)$ is a basis for $H_1(T_0)$. An oriented knot in $S^1\times S^2$
is called a $(p,q)$-torus knot if it is isotopic to an oriented knot on $T_0$
homologically equivalent to $pm_0+ql_0$ (with $p$ and $q$ coprime). A $(\pm 1,0)$-torus knot is trivial,
i.e., bounds a disk in $S^1\times S^2$. A $(p,1)$-torus knot is isotopic to $S^1\times \{(0,0,1)\}$,
oriented by the parameter $\theta$. For this knot type, we have

\begin{theorem}\label{S^1 knot} Two oriented Legendrian knots
in $(S^1\times S^2,\xi_{\st})$ of the same oriented knot type as $S^1\times \{ (0,0,1)\}$
are Legendrian isotopic if and only if their rotation numbers agree.
\end{theorem}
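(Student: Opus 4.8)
The plan is to prove the two directions separately, with the forward implication routine and the converse carrying all the content. Necessity is immediate: the rotation number $\rot_v(K)$, defined relative to the fixed global trivializing section $v$ of $\xi_{\st}$, is manifestly invariant under Legendrian isotopy, since such an isotopy moves $K$ through Legendrian knots while $v$ and $\xi_{\st}$ are held fixed. So I would dispose of this in one line and concentrate on the converse.

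For the converse, suppose $K_0$ and $K_1$ have the given knot type and $\rot(K_0)=\rot(K_1)$. First I would take standard contact neighborhoods $N(K_i)$, each a solid torus with convex boundary whose dividing set consists of two parallel curves of slope equal to the contact framing of $K_i$. Because $K_i$ is smoothly isotopic to the core $S^1\times\{(0,0,1)\}$ of the Heegaard solid torus $V_0$, its exterior $M_i=(S^1\times S^2)\setminus \Int N(K_i)$ is again a solid torus, with meridian $m_0$; and since $\xi_{\st}$ is tight, its restriction to $M_i$ is a tight contact structure on a solid torus with convex boundary. The problem thus reduces to comparing these two tight solid tori together with the way they are reglued to $N(K_i)$.

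The geometric heart of the matter, and the reason only $\rot$ appears, is that the product longitude $l_0$ is not preserved by the ambient isotopies available in $S^1\times S^2$. Concretely, I would use the ``spinning'' diffeomorphism $(\theta,x)\mapsto(\theta,R_{f(\theta)}(x))$, where $R_{f(\theta)}$ rotates $S^2$ about the $x_3$-axis through an angle $f(\theta)$ with $f(2\pi)-f(0)=2\pi n$. This map is isotopic to the identity, fixes $K=S^1\times\{(0,0,1)\}$ pointwise, and Dehn-twists a neighborhood of $K$, thereby changing the contact framing by $n$; pushing $\xi_{\st}$ forward and correcting via Gray stability and the uniqueness of the tight contact structure on $S^1\times S^2$ promotes it to a Legendrian isotopy. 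Since $v$ is carried to a homotopic section of $\xi_{\st}$, this Legendrian isotopy preserves $\rot$. Hence the contact framing, which would otherwise play the role of $\tb$, is not a Legendrian isotopy invariant, and I may freely normalize it.

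With the contact framings of $K_0$ and $K_1$ made equal by spinning, I would finish by a convex surface argument: after Legendrian realization and the classification of tight contact structures on the exterior solid torus $M_i$ (Honda's count in terms of the boundary slope and the relative Euler class), the only remaining modulus is the relative Euler class of $\xi_{\st}|_{M_i}$, which I would identify with $\rot(K_i)$. Equal rotation numbers then yield an isotopy of tight contact structures on the exterior fixing the boundary data; regluing the standard neighborhoods produces a contactomorphism of $(S^1\times S^2,\xi_{\st})$, isotopic to the identity, carrying $K_0$ to $K_1$, and Gray stability converts it into the desired Legendrian isotopy. The hard part will be the two matching problems in this homologically essential setting where no Seifert surface exists: realizing the spinning as a genuine Legendrian isotopy, rather than merely a smooth one, so that framing truly drops out; and identifying Honda's relative Euler class of the exterior with the globally defined rotation number, so that no spurious Legendrian classes sharing the same $\rot$ can survive.
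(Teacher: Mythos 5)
Your reduction to standard neighborhoods and solid-torus exteriors is fine, but the argument breaks at the step you use twice and yourself flag as ``the hard part'': passing from a smooth isotopy to a Legendrian one via Gray stability. In $(S^1\times S^2,\xi_{\st})$ the principle ``a contactomorphism smoothly isotopic to the identity carries a Legendrian knot to a Legendrian-isotopic knot'' is \emph{false}, and its failure is exactly the phenomenon this theorem is sensitive to. By \cite{DG3}, the contactomorphisms of $(S^1\times S^2,\xi_{\st})$ acting trivially on homology form, up to contact isotopy, an infinite cyclic group generated by a contactomorphism $r_c$ that is smoothly isotopic to the rotation map $r$, and $r_c(K_0)$ is Legendrian isotopic to $S_+(K_0)$ for a suitable Legendrian core $K_0$. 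Hence $r_c^2$ is smoothly isotopic to the identity (since $\pi_1(SO(3))\cong\Z_2$), yet it shifts the rotation number of a knot in the core's knot type by $2$; since $\rot$ is a Legendrian isotopy invariant, $r_c^2$ is not contact isotopic to the identity and does not preserve Legendrian isotopy classes. Consequently your final step --- ``regluing \dots\ produces a contactomorphism \dots\ isotopic to the identity \dots\ and Gray stability converts it into the desired Legendrian isotopy'' --- does not follow: Gray stability turns a path of contact structures into an ambient isotopy, but it cannot decide whether your glued contactomorphism is contact isotopic to the identity rather than to some nontrivial power $r_c^{2k}$, and that is precisely the question at stake.

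The spinning step has the same defect plus two concrete errors. First, the spinning map is isotopic to the identity only for even $n$; for odd $n$ it is the Gluck rotation $r$, which is not isotopic to the identity (the paper relies on this in Step 3 of the proof of Proposition~\ref{topological classification}). Second, the contact realization of spinning is $r_c^n$, which changes $\rot$ by $n$ \emph{together with} the framing --- the two shifts are coupled, as the relation $r_c(K_0)\simeq S_+(K_0)$ shows --- so no Legendrian isotopy ``changing the contact framing by $n$ while preserving $\rot$'' is produced this way. (It is true that $K$ and $S_+S_-(K)$ are Legendrian isotopic, i.e.\ the framing can drop by $2$ with $\rot$ fixed, but that is a consequence of the theorem, not something the spinning construction establishes.) The paper's proof avoids all of this: it builds a single contactomorphism $\phi$ with $\phi(K)=K'$ from the standard-neighborhood theorem plus the extension result \cite[Theorem 3.14]{EH} (no convex decomposition of the exterior is needed), and then the entire content lies in the appeal to \cite{DG3}: $\phi$ is contact isotopic to $r_c^m$ for a uniquely determined $m$, so $\rot(K')=\rot(K)+m$, and equal rotation numbers force $m=0$, i.e.\ $\phi$ is contact isotopic to the identity. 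This computation of the contact mapping class group is the essential external input your argument is missing; it cannot be replaced by Gray stability or by the uniqueness of the tight contact structure on $S^1\times S^2$.
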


Let $K$ be a $(p,q)$-torus knot with $q\ge 2$. There is a Heegaard torus $T$ on which $K$ sits.
In Section~2, we shall prove that the framing of $K$ induced by $T$
(i.e., the framing corresponding to two simple closed curves which are the intersection of $T$
and the boundary of a tubular neighborhood of $K$), is independent of the Heegaard torus $T$ we choose.
For a Legendrian $(p,q)$-torus knot $K$ with $q\ge 2$, we define the twisting number $\tw (K)$ to
be the number of counterclockwise (right) $2\pi$ twists of $\xi_{\st}$ along $K$, relative to the framing of $K$ induced by $T$. We have

\begin{theorem}\label{torus knot} Let $K$ be a Legendrian $(p,q)$-torus knot and $K'$ a Legendrian $(p',q)$-torus knot
in $(S^1\times S^2,\xi_{\st})$ with $q\ge 2$. Then $K$ and $K'$ are Legendrian isotopic if and only if
their oriented knot types and their classical invariants $\tw$ and $\rot$ agree.
\end{theorem}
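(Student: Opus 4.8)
The plan is to follow the Etnyre–Honda strategy for classifying Legendrian torus knots in $S^3$, adapted to the $S^1\times S^2$ setting. The argument has two halves: a \emph{classification up to the maximal twisting number} (the analogue of maximal Thurston–Bennequin invariant), and a \emph{stabilization} argument showing that once one destabilizes to the maximal-$\tw$ stratum, the classical invariants determine the knot. First I would reduce the problem to understanding Legendrian knots sitting on a convex Heegaard torus $T$. Given a Legendrian $(p,q)$-torus knot $K$ with $q\ge 2$, I would use Giroux's convex surface theory to isotope a Heegaard torus $T$ containing $K$ so that $T$ is convex and $K$ is a Legendrian divide or ruling curve on $T$. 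The dividing set $\Gamma_T$ of a convex torus consists of an even number of parallel essential curves of some slope; the number of dividing curves and their slope are controlled by the twisting number $\tw(K)$, since $\tw$ measures exactly the contact framing relative to the surface framing.

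The key geometric input is the decomposition of $(S^1\times S^2,\xi_{\st})$ along $T$ into two solid tori $V_0$ and $V_1=\overline{(S^1\times S^2)\setminus V_0}$, each carrying a tight contact structure. The next step is to classify, using the Honda/Giroux classification of tight contact structures on solid tori and the \emph{gluing/convex decomposition} machinery, the possible configurations of $K$ together with the dividing sets on $\partial V_0=\partial V_1=T$. Here I would analyze the contact structures on the two complementary solid tori via their continued-fraction / Farey-graph data, and I would count the tight structures on each piece compatible with a given boundary slope. The upshot should be an explicit upper bound on the number of Legendrian $(p,q)$-torus knots realizing the maximal value of $\tw$ with a given rotation number, proved by a \emph{state traversal} (bypass) argument: any two such knots can be connected by a sequence of bypass attachments that do not change the classical invariants, hence are Legendrian isotopic.

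For the lower bound — showing these knots are genuinely distinct and that every $(\tw,\rot)$ pair in the allowed range is realized — I would construct explicit Legendrian representatives (for example via a front projection in an appropriate surgery or open-book description of $(S^1\times S^2,\xi_{\st})$) and compute their invariants, then invoke the \emph{stabilization uniqueness} principle: stabilization $S_\pm$ changes $(\tw,\rot)$ by $(-1,\mp 1)$, and two Legendrian knots that agree after a common stabilization and share the same classical invariants are Legendrian isotopic. Combining the maximal-$\tw$ classification with the fact that every Legendrian $(p,q)$-torus knot destabilizes to a maximal-$\tw$ one yields the full statement, since Legendrian isotopy commutes with destabilization.

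The hard part will be the gluing step on the complementary solid tori. Unlike $S^3$, where the complementary pieces are standard and the Farey-tessellation bookkeeping is classical, here $S^1\times S^2$ carries a nontrivial $H_1$, so I must be careful that the two solid tori are glued by the $S^1\times S^2$ (rather than $S^3$) identification; this affects which slopes are permissible and can change the count of tight contact structures and the range of realizable $\tw$. I expect that the independence of the $T$-induced framing, established in Section~2, is exactly what makes $\tw$ well defined and lets the bypass/state-traversal argument close up without ambiguity; verifying that no extra non-isotopic representatives are hidden by the $S^1\times S^2$ gluing — i.e. that the bypass moves exhaust all of the candidate distinctions — is the main obstacle, and it is where I would expect to spend most of the technical effort.
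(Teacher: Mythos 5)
Your plan is a genuinely different route from the paper's, and it contains a genuine gap. The paper does not redo Etnyre--Honda convex-surface theory inside $S^1\times S^2$ at all: it standardizes a Legendrian core $K_1$ of one complementary solid torus using Theorem~\ref{S^1 knot}, identifies $(S^1\times S^2)\setminus K_1$ with $(J^1(S^1),\xi_0)$ via \cite{DG3}, pushes the torus knot into the solid torus $N_0$, and then quotes the Ding--Geiges classification of Legendrian torus knots in $J^1(S^1)$ (\cite{DG1}, repackaged as Proposition~\ref{classification in a solid torus}). Your proposal instead rebuilds the convex-surface/bypass machinery from scratch. The gap is this: in $S^1\times S^2$ a single topological knot type is realized by infinitely many homology classes on a Heegaard torus, namely all $(p',q)$ with $p'\equiv \pm p \bmod 2q$ (Proposition~\ref{topological classification}). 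So two Legendrian representatives with equal $\tw$ and $\rot$ may sit on convex Heegaard tori as curves in \emph{different} classes $(p_1',q)\neq (p_2',q)$; indeed, at maximal $\tw=0$ the realizable rotation numbers form the infinite set $\{\pm p+2dq: d\in\Z\}$, with overlapping contributions coming from different $p'<0$. No bypass/state-traversal argument anchored to a Heegaard torus will identify such a pair, because the smooth isotopy relating the two presentations is a power of the rotation $r^2$ (and/or the flip $b$), which is smoothly isotopic to the identity but does not preserve the Heegaard-torus data. To upgrade that smooth isotopy to a Legendrian isotopy one must know which ambient contactomorphisms are contact isotopic to the identity, i.e.\ the contact mapping class group of $(S^1\times S^2,\xi_{\st})$ computed in \cite{DG3} (it is $\Z$, generated by $r_c$, and $r_c^2$ is \emph{not} contact isotopic to the identity even though it is smoothly isotopic to it). Equivalently, one needs Theorem~\ref{S^1 knot}, which gives an ambient contact isotopy carrying the core $K_0$ to $S_+S_-(K_0)$; this is exactly what lets the paper re-index $p'\mapsto p'-2q$ (the contactomorphism $g$) and handle $p'\equiv -p\bmod 2q$ (the contactomorphism $h$). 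Your proposal never produces this ingredient, and dividing-set/slope bookkeeping cannot, since the issue is about $\pi_0$ of the contactomorphism group, not about tight structures on the complementary solid tori.

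A secondary error: the ``stabilization uniqueness principle'' you invoke --- that two Legendrian knots sharing classical invariants and agreeing after a common stabilization are Legendrian isotopic --- is false in general (Legendrian twist knots and cables provide counterexamples). What Etnyre--Honda actually use is: every non-maximal representative destabilizes, the maximal stratum is classified, and one then determines precisely which stabilizations of maximal representatives become isotopic. In this setting that last step is again the $p'$ re-indexing problem above, now with infinitely many ``peaks'' at $\tw=0$ to merge, so it cannot be waved through; it is the crux of the theorem rather than routine bookkeeping.
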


In Section~2, we give the topological classification of torus knots in $S^1\times S^2$. In Section~3,
we recall the classification of Legendrian torus knots in the 1-jet space $J^1(S^1)$ of the circle with its
standard contact structure, and give an analogous result for Legendrian torus knots in a solid torus with
a suitable tight contact structure. In Section~4, we prove Theorems~\ref{S^1 knot} and~\ref{torus knot}.\\

{\bf Acknowledgements}:  Authors would like to thank John Etnyre for helpful conversations, careful reading
of the paper and invaluable comments.  Part of this work was carried out while the third author was
visiting Georgia Institute of Technology and he would like to thank them for their hospitality.
The third author was partially supported by NSFC grant 11001171 and the China Scholarship Council grant 201208310626.

\section{Topological torus knots in $S^1\times S^2$}

For $r_1,r_2\in \Q\setminus \Z$, we describe the Seifert manifold
$M(D^2;r_1,r_2)$  as follows. Let $\Sigma$ be an oriented pair of
pants. For each connected component $T_i$ of
$-(\partial\Sigma\times S^1)=T_1\cup T_2\cup T_3$, denote the
homology class in $H_1(T_i)$ of the connected component of
$-\partial(\Sigma\times\{ 1\})$ in $T_i$ by $\mu_i$ and the
homology class of the $S^1$ factor in $H_1(T_i)$ by $\lambda_i$.
For $i=1,2$, let $V_{i}=D^{2}\times S^{1}$. Then $M(D^2;r_1,r_2)$
is obtained from $\Sigma\times S^1$ by gluing $V_i$ to $T_i$,
$i=1,2$, using a diffeomorphism $\varphi_i:\partial V_i\to T_i$
sending the meridian $\partial (D^2\times\{ 1\})$ to a circle in
$T_i$ homologically equivalent to $p_i\mu_i-q_i\lambda_i$, where
$p_i,q_i$ are coprime and $\frac{q_i}{p_i}=r_i$. Note that
$M(D^2;r_1,r_2)$  corresponds to $M(0,1;r_1,r_2)$ in \cite[Chapter
2, Section 1]{Ha}.

Let $K$ be an oriented knot in $S^1\times S^2$. Denote a tubular
neighborhood of $K$ (diffeomorphic to a solid torus) by $\nu K$.
Let $T$ be a Heegaard torus in $S^1\times S^2$ on which $K$ sits.
Then $\partial(\nu K)\cap T$ are the two longitudes of $\nu K$
determined by the framing of $K$ induced by $T$.

\begin{proposition} \label{framing}
If $K$ is a $(p,q)$-torus knot in $S^1\times S^2 $ with $q\ge 2$,
then the framing of $K$ induced by a Heegaard torus $T$ on which
$K$ sits is independent of the Heegaard torus $T$ we choose.
\end{proposition}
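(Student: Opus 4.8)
The plan is to reinterpret the $T$-framing as the boundary slope of an annulus in the exterior of $K$, and then to recognise that annulus as a fiber annulus of a Seifert fibration that is intrinsic to the exterior, hence independent of $T$. Write $M=(S^1\times S^2)\setminus\Int(\nu K)$, a compact manifold whose boundary $\partial M=\partial(\nu K)$ is a torus. For a Heegaard torus $T$ containing $K$, the intersection $A_T:=T\cap M$ is an annulus properly embedded in $M$ with $\partial A_T\subset\partial M$, and by definition its two boundary components are the longitudes determining the $T$-framing. So the proposition is equivalent to saying that the slope of $\partial A_T$ on $\partial M$ is independent of $T$. First I would build, from $T$, an explicit Seifert fibration of $M$. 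The torus $T$ splits $S^1\times S^2$ into two solid tori $V_1,V_2$, and because the splitting yields $S^1\times S^2$ rather than $S^3$ the meridian of each $V_i$ is $\pm m_0$; taking $l_0$ as the common longitude, the knot $K=pm_0+ql_0$ runs $q$ times longitudinally in each $V_i$. Foliating $T$ by the parallel $(p,q)$-curves isotopic to $K$ and extending over each $V_i$ by the standard fibration of a solid torus with one exceptional fiber of multiplicity $q$ at its core, I obtain a Seifert fibration of $M$ over the disk with two exceptional fibers of order $q$, realising $M$ as a manifold $M(D^2;r_1,r_2)$ with each $r_i$ of denominator $q$. In this fibration $K$ is a regular fiber and $T$ is vertical, so $A_T$ is a vertical annulus whose boundary consists of regular fibers; hence the $T$-framing equals the fiber slope of this fibration on $\partial M$.

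It then remains to show the fiber slope is intrinsic to $M$. Since $q\ge 2$, both exceptional fibers are genuine, so $M$ is Seifert fibered over $D^2$ with exactly two exceptional fibers of order $\ge 2$. Such a manifold admits a unique Seifert fibration up to isotopy provided it is not one of the exceptional cases; structurally it is neither a solid torus (which needs at most one exceptional fiber) nor $T^2\times I$ (which has none), and the only remaining exception, the twisted $I$-bundle over the Klein bottle, has two exceptional fibers both of order exactly $2$ and must be excluded by computing the Seifert invariants (equivalently $H_1(M)$) when $q=2$. Granting this, the fibration of $M$ is unique up to isotopy, just as for the complement of a nontrivial torus knot in $S^3$. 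Consequently, given two Heegaard tori $T,T'$ through $K$, the construction above yields two Seifert fibrations of the same $M$, each with $K$ as a regular fiber; an ambient isotopy carries one to the other, and being isotopic to the identity it preserves slopes on $\partial M$, so the $T$- and $T'$-framings coincide.

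The hard part is precisely the uniqueness of the Seifert fibration, and within it the exclusion of the twisted $I$-bundle over the Klein bottle: this exclusion is not merely technical but essential, since that manifold carries two Seifert fibrations with different boundary fiber slopes, so were $M$ ever equal to it the framing would genuinely depend on $T$. This is also exactly where the hypothesis $q\ge 2$ enters, for when $q=1$ (or for a $(\pm1,0)$-knot) the exterior $M$ collapses to a solid torus, whose fibrations are highly non-unique and whose $T$-framing really does vary with $T$. The remaining ingredients — that the $(p,q)$-foliation of $T$ extends to the standard solid-torus fibration, and that a vertical annulus has a well-defined boundary slope — are routine once the fibered picture is in place.
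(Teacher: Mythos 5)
Your reduction of the $T$-framing to the boundary fiber slope of a Seifert fibration built from $T$ is correct, and for $q>2$ your argument goes through: the exterior is then fibered over $D^2$ with two exceptional fibers of order $q>2$, which is none of the exceptional manifolds, so the fibration (hence its boundary fiber slope) is unique up to isotopy. The genuine gap is the case $q=2$, and it cannot be closed in the way you propose, because for $q=2$ the exterior $M$ \emph{is} the twisted $I$-bundle over the Klein bottle. Indeed, choosing $s,t$ with $ps-2t=1$, the exterior of a $(p,2)$-torus knot is the Seifert manifold $M(D^2;\tfrac{s}{2},-\tfrac{s}{2})$ with $s$ odd, i.e. $M(D^2;\tfrac12,-\tfrac12)$, which is exactly that $I$-bundle; consistently, $H_1(M)\cong\Z\oplus\Z_2$ (the class of $K$ in $H_1(S^1\times S^2)$ is twice the generator, so in the long exact sequence of the pair the meridian-disk class maps by degree $2$), which agrees with $H_1$ of the Klein bottle. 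So the computation you propose as the ``exclusion'' would in fact confirm that $M$ is the excluded manifold, and your proof collapses precisely in a case the hypothesis $q\ge 2$ allows and the proposition must cover.

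Your further claim that if $M$ were the twisted $I$-bundle then the framing ``would genuinely depend on $T$'' is also backwards: the proposition is true for $q=2$, because although $M$ then carries two non-isotopic fibrations with different boundary fiber slopes (one over $D^2$ with two order-$2$ exceptional fibers, one over the M\"obius band with none), only fibrations of the first kind ever arise from a Heegaard torus --- your own construction always produces the $D^2$-type fibration. The repair is to use a finer statement than uniqueness of ``the'' fibration: either invoke that any two fibrations of $M$ over $D^2$ with two exceptional fibers are isotopic (the two fibrations of the twisted $I$-bundle are distinguished by their base orbifolds), or argue as the paper does. The paper's proof meets exactly the same fork and resolves it by a separating/non-separating dichotomy: for $q=2$ the exterior has two essential annuli, a vertical one (separating) and a horizontal one (non-separating), but $A_T$ is always separating, and the essential \emph{separating} annulus is unique up to isotopy; for $q>2$ there is only one essential annulus altogether. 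Since your annulus $A_T$ separates $M$ into the two solid tori cut off by $T$, importing that observation would fix your argument with minimal change.
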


\begin{proof}
The compact manifold $S^1\times S^2 \setminus \Int(\nu K)$ is a
Seifert fibred space having $\partial(\nu K)\cap T$ as two regular
fibers. Fix such a Seifert fibration of $S^1\times S^2 \setminus
\Int(\nu K)$, then the essential surfaces are either vertical or
horizontal (cf. \cite[Proposition~1.11]{Ha}). If $q>2$, then
$S^1\times S^2 \setminus \Int(\nu K)$ has a unique (up to isotopy)
essential surface which is a vertical annulus and separating. If
$q=2$, then $S^1\times S^2 \setminus \Int(\nu K)$ has a unique
vertical essential annulus and a horizontal essential annulus. The
vertical annulus is separating, and the horizontal one is
non-separating. So the Seifert fibred space $S^1\times S^2
\setminus \Int(\nu K)$ has a unique essential separating annulus
whose boundary is isotopic to $\partial(\nu K)\cap T$ in
$\partial(\nu K)$. Hence the framing of $K$ induced by a Heegaard
torus $T$ on which $K$ sits is determined by the the compact
manifold $S^1\times S^2 \setminus \Int(\nu K)$, and thus
independent of the Heegaard torus $T$ we choose.
\end{proof}

We classify the torus knots in $S^1\times S^2$ as follows.

\begin{proposition} \label{topological classification}
For $q\ge 2$, a $(p,q)$-torus knot and a $(p',q)$-torus knot are isotopic if and only if $p'\equiv p\mod 2q$ or $p'\equiv -p\mod 2q$.
\end{proposition}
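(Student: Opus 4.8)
The plan is to normalize both knots onto the standard Heegaard torus, dispatch the ``if'' direction with explicit ambient isotopies, and dispatch the ``only if'' direction with a homeomorphism invariant of the knot exterior extracted from its Seifert structure. By the very definition of a $(p,q)$-torus knot I may take $K$ and $K'$ to be the standard simple closed curves $C_{p,q}$ and $C_{p',q}$ on $T_0$, of classes $pm_0+ql_0$ and $p'm_0+ql_0$; since an essential simple closed curve on a torus is determined up to isotopy on the torus by its homology class, it suffices to decide when $C_{p,q}$ and $C_{p',q}$ are isotopic in $S^1\times S^2$. For the ``if'' direction I will produce orientation-preserving diffeomorphisms isotopic to the identity that realize the two elementary moves. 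The first is the rotation $\rho_n(\theta,x_1,x_2,x_3)=(\theta,R_{n\theta}(x_1,x_2),x_3)$, where $R_\alpha$ denotes rotation by $\alpha$ in the $x_1x_2$-plane: this is a well-defined diffeomorphism for every $n\in\Z$, it fixes $m_0$ and sends $l_0$ to $l_0+nm_0$, hence carries $C_{p,q}$ to $C_{p+nq,q}$; because the loop $\theta\mapsto R_{n\theta}$ represents $n\bmod 2$ in $\pi_1(SO(3))=\Z/2$, the map $\rho_n$ is isotopic to the identity exactly when $n$ is even, producing the isotopies $C_{p,q}\simeq C_{p+2kq,q}$ and accounting for the congruence modulo $2q$. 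The second is the involution $\sigma(\theta,x_1,x_2,x_3)=(\theta,x_1,-x_2,-x_3)$, a rotation by $\pi$ about the $x_1$-axis in the $S^2$ factor and so isotopic to the identity; it fixes $l_0$, reverses $m_0$, and thus sends $C_{p,q}$ to $C_{-p,q}$, accounting for the sign. Composing these gives a genuine isotopy whenever $p'\equiv\pm p\pmod{2q}$.

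For the ``only if'' direction I will argue contrapositively through an invariant of the exterior, which suffices because the previous step already realizes every listed equivalence by honest isotopies. If $C_{p,q}$ and $C_{p',q}$ are isotopic, then there is an orientation-preserving homeomorphism of the exteriors carrying meridian to meridian. By the proof of Proposition~\ref{framing} each exterior is a Seifert fibred space of the form $M(D^2;r_1,r_2)$ with two exceptional fibres of order $q$, the common regular fibre on $\partial(\nu K)$ being the framing of Proposition~\ref{framing}; I will compute the invariants to be $r_1=p/q$ and $r_2=-p/q$, coming from the two cores of the Heegaard solid tori with their opposite induced boundary orientations (so that the closed-up Seifert space has Euler number zero, as it must for $S^1\times S^2$), and I will identify the meridian $m(K)$ as an explicit second slope on $\partial(\nu K)$. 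Since $q\ge 2$ the Seifert fibration is unique up to isotopy, so the homeomorphism preserves the fibre slope and the Seifert data; I then invoke the classification of such Seifert manifolds (cf.\ \cite{Ha}) to read off that the only admissible identifications—reordering or reversing the pair of exceptional fibres together with adjusting the section by a multiple of the fibre, constrained by the fixed meridian slope—are precisely $p'\equiv\pm p\pmod{2q}$.

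The main obstacle is the bookkeeping in this last step: one must pin down the Seifert invariants of the exterior together with the meridian slope on $\partial(\nu K)$ exactly, and then match the moves allowed by the Seifert classification against the meridian constraint to see that the correct modulus is $2q$ rather than $q$ and that the residual symmetry is exactly $p\mapsto -p$. The case $q=2$ requires separate care, since by Proposition~\ref{framing} the exterior then carries an additional horizontal essential annulus; there the extra symmetry should collapse all $(p,2)$-knots into a single class, which is consistent with the statement because $\pm p\bmod 4$ already exhausts all odd residues.
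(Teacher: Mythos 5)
Your ``if'' direction is correct and is essentially the paper's Step~1 (your $\rho_{2k}$ is the paper's $r^2$ iterated, your $\sigma$ is the paper's $b$). The ``only if'' direction, however, has a genuine gap, and it is not a matter of bookkeeping: the invariant you propose cannot work, even in principle. The orientation-preserving diffeomorphism $r=\rho_1$ of $S^1\times S^2$ (your rotation with $n=1$) carries $K(p,q)$ onto $K(p+q,q)$; it therefore restricts to an orientation-preserving diffeomorphism between their exteriors taking meridian to meridian, and even taking the Heegaard-torus framing to the Heegaard-torus framing, since $r(T_0)=T_0$. Hence the data ``exterior with its Seifert structure plus the meridian slope, up to orientation-preserving homeomorphism'' takes \emph{identical} values on $K(p,q)$ and $K(p+q,q)$, although for $q>2$ these two knots are not isotopic (e.g.\ $p'=p+q$ satisfies neither congruence mod $2q$ when $q>2$ and $\gcd(p,q)=1$). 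So no analysis of Seifert invariants and meridian constraints can yield the modulus $2q$; it can only ever yield $q$, which is precisely what the paper's Step~2 extracts from Hatcher's classification.

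The missing idea is that in $S^1\times S^2$, unlike $S^3$, a knot is determined by its (meridian-marked) exterior only up to the action of the mapping class group of the ambient manifold, and $\pi_0\mathrm{Diff}(S^1\times S^2)$ is nontrivial: the rotation $r$ is \emph{not} isotopic to the identity (\cite[Lemma 1]{DG3}, \cite{HR}). Passing from ``isotopic mod $q$'' to ``isotopic mod $2q$'' is exactly the paper's Step~3: assuming $K(p,q)$ were isotopic to $K(p+q,q)$, one obtains a diffeomorphism $g$ isotopic to $r$ that is the identity near $K(p,q)$; analyzing $g(A)$ for the essential vertical annulus $A=T_0\setminus\Int(N)$ in the exterior (via the vertical/horizontal dichotomy of \cite[Proposition 1.11]{Ha}), one concludes that either $g$ is isotopic to the identity --- contradicting that $r$ is not --- or $q$ divides $2$. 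This mapping-class-group argument is entirely absent from your proposal and cannot be replaced by an exterior invariant. Note also that your parenthetical claim that $\rho_n$ is isotopic to the identity ``exactly when $n$ is even'' already presupposes this nontrivial fact: $\pi_1(SO(3))\cong\Z_2$ gives only the ``$n$ even $\Rightarrow$ isotopic to the identity'' half, which is all the ``if'' direction needs, but the converse half is equivalent to the deep input your ``only if'' argument is missing. (Your closing remark about $q=2$ is fine: there the mod-$q$ conclusion already implies the mod-$2q$ statement by elementary arithmetic, as in the paper's Step~4.)
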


\begin{proof} For $p,q$ coprime and $q\ge 2$, let $K(p,q)$ be an oriented knot on
$T_0\subset S^1\times S^2$ homologically equivalent to $pm_0+ql_0$
(for the definitions of $T_0,m_0,l_0$, see Section~1). We prove
that $K(p,q)$ and $K(p',q)$ are isotopic in $S^1\times S^2$ if and
only if $p'\equiv p\mod 2q$ or $p'\equiv -p\mod 2q$. We divide the
proof into 4 steps.

\textbf{Step 1}. We prove that if $p'\equiv p\mod 2q$ or $p'\equiv -p\mod
2q$, then $K(p',q)$ and $K(p,q)$ are isotopic.

Write $r_{\theta}$ for the rotation of $S^2\subset\R^3$ about the
$x_3$-axis through an angle $\theta$. Define a diffeomorphism $r$
of $S^1\times S^2$ by
$r(\theta,\mathbf{x})=(\theta,r_{\theta}(\mathbf{x}))$. Since
$r^2(\theta,\mathbf{x})=(\theta,r_{2\theta}(\mathbf{x}))$ and
$\pi_1(SO(3))\cong \Z_2$, $r^2$ is isotopic to the identity (cf.
\cite[Lemma 1]{DG3}). The diffeomorphism $r^2$ sends a knot on $T_0$ homologically
equivalent to $pm_0+ql_0$ to a knot on $T_0$ homologically
equivalent to $(p+2q)m_0+ql_0$. Thus $K(p,q)$ is isotopic to
$K(p+2q,q)$ in $S^1\times S^2$. Hence if $p'\equiv p\mod 2q$, then
$K(p',q)$ is isotopic to $K(p,q)$ in $S^1\times S^2$.

Define a diffeomorphism $b$ of $S^1\times S^2$ by
$b(\theta,x_1,x_2,x_3)=(\theta,x_1,-x_2,-x_3)$. Then $b$ is isotopic to
the identity and sends a knot on $T_0$ homologically equivalent to
$pm_0+ql_0$ to a knot on $T_0$ homologically equivalent to
$-pm_0+ql_0$. Thus $K(p,q)$ is isotopic to $K(-p,q)$ in $S^1\times
S^2$. Combining this with the preceding paragraph, we conclude
that if $p'\equiv -p\mod 2q$, then $K(p',q)$ is isotopic to
$K(p,q)$ in $S^1\times S^2$.

\textbf{Step 2}. We prove that if $K(p,q)$ and $K(p',q)$ are isotopic in
$S^1\times S^2$, then $p'\equiv p\mod q$ or $p'\equiv -p\mod q$.

Since $p$ and $q$ are coprime, we may choose $s,t\in\Z$ such that
$ps-tq=1$. The closure of the complement of a tubular neighborhood
of $K(p,q)$ in $S^1\times S^2$ is the Seifert manifold
$M(D^2;\frac{s}{q},-\frac{s}{q})$. Similarly, we choose
$s',t'\in\Z$ such that $p's'-t'q=1$. Then the closure of the
complement of a tubular neighborhood of $K(p',q)$ is
$M(D^2;\frac{s'}{q},-\frac{s'}{q})$. Now suppose $K(p,q)$ and
$K(p',q)$ are isotopic in $S^1\times S^2$. Then
$M(D^2;\frac{s}{q},-\frac{s}{q})$ and
$M(D^2;\frac{s'}{q},-\frac{s'}{q})$ are orientation-preserving
diffeomorphic. By \cite[Theorem 2.3 and Proposition 2.1]{Ha}, we
have $s'\equiv s\mod q$ or $s'\equiv -s\mod q$. If $s'\equiv s\mod
q$, then there exists an integer $k$ such that $s'=s+kq$. Combined
with $ps-tq=1$ and $p's'-t'q=1$, this gives $(p'-p)s=(t'-t-kp')q$.
Since $q$ and $s$ are coprime, $q$ divides $p'-p$. Thus $p'\equiv
p\mod q$. Similarly, if $s'\equiv -s\mod q$, then $p'\equiv -p\mod
q$.

\textbf{Step 3}. We prove that for $q>2$, $K(p,q)$ is not isotopic to
$K(p+q,q)$ in $S^1\times S^2$.

First note that $r(K(p,q))$ is isotopic to $K(p+q,q)$. Thus if
$K(p,q)$ is isotopic to $K(p+q,q)$, then $r(K(p,q))$ is isotopic
to $K(p,q)$ and there is an orientation-preserving diffeomorphism
$g$, isotopic to $r$, such that the restriction of $g$ on $K(p,q)$ is the
identity. Note that $g(T_0)$ is also a Heegaard torus. Thus the framing of
$K(p,q)$ induced by $T_0$ is the same as the framing of $K(p,q)$
induced by $g(T_0)$. Hence after an isotopy, we may assume that
$g$ is the identity on a tubular neighborhood $N$ (diffeomorphic
to a solid torus) of $K(p,q)$.

The compact manifold $(S^1\times S^2)\setminus\Int(N)$ is diffeomorphic to the
Seifert manifold $M(D^2;\frac{s}{q},-\frac{s}{q})$. Recall that
$M(D^2;\frac{s}{q},-\frac{s}{q})$ is obtained from $\Sigma\times
S^1$ by gluing $V_i$ to $T_i$ ($i=1,2$), using a diffeomorphism
$\varphi_i:\partial V_i\to T_i$ sending the meridian $\partial
(D^2\times\{ 1\})$ to a circle in $T_i$ homologically equivalent
to $q\mu_1-s\lambda_1$ for $i=1$, or $q\mu_2+s\lambda_2$ for $i=2$
(for the notation $\Sigma
,T_i(i=1,2,3),\mu_i,\lambda_i,V_i(i=1,2)$, see the first paragraph
of this section). The simple closed curve $S^1\times \{ (0,0,-1)\}$ corresponds to a core
of $V_1$ and the simple closed curve $S^1\times \{(0,0,1)\}$ corresponds to a core of
$V_2$. Consider the restriction of $g$ to $(S^1\times
S^2)\setminus\Int(N)$, still denoted by $g$, as a
self-diffeomorphism of $M(D^2;\frac{s}{q},-\frac{s}{q})$ which is
the identity on the boundary. The compact surface $A=T_0\setminus\Int (N)$ is an
essential vertical annulus in $M(D^2;\frac{s}{q},-\frac{s}{q})$.
By \cite[Proposition 1.11]{Ha}, $g(A)$ is isotopic (relative to
the boundary) to a vertical annulus. Thus we may assume that
$g(A)$ is a vertical annulus (disjoint from $V_1$ and $V_2$).

\begin{figure}[htb]
\begin{overpic}
{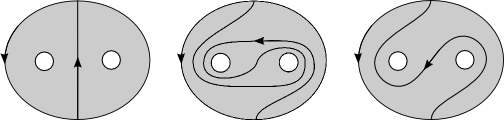}
\put(30, 10){$\alpha$}
\put(80, 80){$\beta$}
\put(50, 50){$C$}
\put(100, 0){$\Sigma$}

\put(172, 10){$\alpha$}
\put(220, 80){$\beta$}
\put(200, 67){$C'$}
\put(244, 0){$\Sigma$}

\put(314, 10){$\alpha$}
\put(365, 80){$\beta$}
\put(335, 50){$C'$}
\put(385, 0){$\Sigma$}
\end{overpic}
\caption{The oriented pair of pants $\Sigma$, the oriented arc $C$ shown in the left, and two possible oriented arcs $C'$ shown in the middle and right.}
\label{fig:1}
\end{figure}

Let $C$ denote the arc in $\Sigma$ such that $C\times S^1$ is the
annulus $A$ (see the left of Figure~\ref{fig:1}). Let $C'$ denote
the arc in $\Sigma$ such that $C'\times S^1$ is the annulus $g(A)$
(see the middle and  right of  Figure~\ref{fig:1}). Let $B$ denote
the component of $\partial\Sigma$ such that $B\times S^1$ is
$T_3$. The two points $C\cap B$ divides $B$ into $2$ arcs $\alpha$
and $\beta$ (see the left of Figure~\ref{fig:1}). In
$M(D^2;\frac{s}{q},-\frac{s}{q})$, the torus $(\alpha \cup
C)\times S^1$ bounds a solid torus $N_0$ containing one of $V_1$
and $V_2$, say $V_1$. Orient $\alpha$ as a part of $\partial
\Sigma$. Orient $C$ such that the orientation on $\alpha$ and the
orientation on $C$ give an orientation on $\alpha\cup C$ (see the
left of Figure~\ref{fig:1}). Note that $g$ is the identity on
$T_3$. Orient $C'$ such that the orientation on $\alpha$ and the
orientation on $C'$ give an orientation on $\alpha\cup C'$ (see
the middle and right of Figure~\ref{fig:1}). Denote the class in
$H_1((\alpha\cup C)\times S^1)$ of $(\alpha\cup C)\times\{ 1\}$ by
$\mu$ and the class in $H_1((\alpha\cup C)\times S^1)$ of a fiber
by $\lambda$. Then $q\mu-s\lambda$ is the class of a meridian of
$N_0$. Denote the class in $H_1((\alpha\cup C')\times S^1)$ of
$(\alpha\cup C')\times\{ 1\}$ by $\mu'$ and the class in
$H_1((\alpha\cup C')\times S^1)$ of a fiber by $\lambda'$. In
$M(D^2;\frac{s}{q},-\frac{s}{q})$, the torus $(\alpha \cup
C')\times S^1$ bounds a solid torus $N_0^{\prime}$ containing one
of $V_1$ and $V_2$, and $g$ sends $N_0$ onto $N_0^{\prime}$.

Suppose that $g(C\times\{ 1\})\subset C'\times S^1$ wraps around
the $S^1$ factor $k$ times as we travel once around $C$. If
$N_0^{\prime}$ contains $V_1$ (see the middle of
Figure~\ref{fig:1}), then $q\mu'-s\lambda'$ is the class of a
meridian of $N_0^{\prime}$. The diffeomorphism $g$ sends the class
$q\mu-s\lambda\in H_1((\alpha\cup C)\times S^1)$ to the class
$q\mu'+(kq-s)\lambda'\in H_1((\alpha\cup C')\times S^1)$. Since
$q\mu'+(kq-s)\lambda'$ needs to be the class of a meridian of
$N_0^{\prime}$, we have $k=0$. Thus we may assume that $g$ is the
identity on $V_1$ (cf. \cite[Section 4.4]{HR}). Then $g$, as a
self-diffeomorphism of $S^1\times S^2$, is the identity near
$S^1\times\{ (0,0,-1)\}$. Hence by \cite[Section 4.4]{HR}, $g$ is
isotopic to the identity. But $g$ is isotopic to $r$ and $r$ is
not isotopic to the identity (cf. \cite[Lemma 1]{DG3}), and we get
a contradiction. Hence $N_0^{\prime}$ contains $V_2$ (see the
right of Figure~\ref{fig:1}). Then $q\mu'+s\lambda'$ is the class
of a meridian of $N_0^{\prime}$ and $kq-s=s$. Thus $2s$ is divided
by $q$. Since $q,s$ are coprime, we conclude that $2$ is divided
by $q$, contrary to the assumption that $q>2$.

\textbf{Step 4}. If $K(p,q)$ and $K(p',q)$ are isotopic in $S^1\times S^2$,
then $p'\equiv p\mod 2q$ or $p'\equiv -p\mod 2q$.

First assume that $q>2$. If $K(p,q)$ and $K(p',q)$ are isotopic,
then by Step 2, there exists an integer $k$ such that $p'=p+kq$ or
$p'=-p+kq$. If $k$ is odd, then by Step 1, $K(p',q)$ is isotopic
to $K(p+q,q)$. Hence $K(p+q,q)$ is isotopic to $K(p,q)$ in
$S^1\times S^2$, contrary to the conclusion in Step 3. Thus $k$ is
even and $p'\equiv p\mod 2q$ or $p'\equiv -p\mod 2q$.

Assume now that $q=2$. If $K(p,2)$ and $K(p',2)$ are isotopic in
$S^1\times S^2$, then by Step 2, there exists an integer $k$ such
that $p'=p+2k$. If $k$ is even, then $p'\equiv p\mod 4$. If $k$ is
odd, then $p+k$ is even since $p$ is odd ($p$ and $2$ are
coprime). Hence by $p'=-p+2(p+k)$, we have $p'\equiv -p\mod 4$.
\end{proof}

\section{Legendrian torus knots in $J^1(S^1)$ and in a solid torus}

For fixing notation, we give definitions and properties of
Legendrian $(p,q)$-torus knots in $J^1(S^1)$ and in a solid torus.

\subsection{Legendrian torus knots in $J^1(S^1)$}
Let $J^1(S^1)=T^*S^1\times\R =S^1\times \R^2=\{ (\theta,y,z):\theta\in S^1=\R/2\pi\Z,y,z\in\R\}$
be the $1$-jet space of $S^1$ with its standard contact structure $\xi_0=\ker(\mathrm{d}z-y\mathrm{d}\theta)$.

One can visualize a Legendrian knot $K\subset J^1(S^1)$ in its front projection to
a strip $[0,2\pi]\times \R$ in the $\theta z$-plane. The Thurston-Bennequin invariant
of $K$ is $\tb(K)=\mathrm{writhe}(K)-\frac{1}{2}\# (\mathrm{cusps}(K))$, where the quantities
on the right are computed from the front projection of $K$. This invariant has a definition
that does not rely on the front projection, and which shows that $\tb$ is a Legendrian isotopy invariant, cf. \cite{DG1}.

For an oriented Legendrian knot $K$ in $J^1(S^1)$, we may define its rotation number
in terms of its front projection as $\rot(K)=\frac{1}{2}(c_--c_+)$, with $c_{\pm}$ the number
of cusps oriented upwards or downwards, respectively; cf. \cite{DG2}. This is the same as
the rotation number defined by the nowhere zero vector field $\partial_y$ in $\xi_0$, cf. \cite[Section 6]{DG1}.

By a torus knot in $J^1(S^1)$, we mean a knot that sits on a torus isotopic to
the torus $T_1=\{(\theta,y,z)\in J^1(S^1):y^2+z^2=1\}$. Consider the solid torus $M_1=\{(\theta,y,z)\in J^1(S^1):y^2+z^2\leq1\}$.
The curve on $T_1=\partial M_1$ given by $\theta=0$, oriented positively in the $yz$-plane,
is a meridian of $M_1$, and let $m_1\in H_1(T_1)$ denote the class of this meridian; the curve
given by $(y,z)=(1,0)$, oriented by the parameter $\theta$, is a longitude, and
let $l_1\in H_1(T_1)$ denote the class of this longitude. Then $(m_1,l_1)$ is a positive basis for $H_1(\partial M_1)$.

For $p,q$ coprime, a $(p,q)$-torus knot in $J^1(S^1)$ is an oriented knot
isotopic to an oriented knot on $T_1$ homologically equivalent to $pm_1+ql_1$.
A $(\pm 1,0)$-torus knot in $J^1(S^1)$ is trivial. A $(p,1)$-torus knot in $J^1(S^1)$
is isotopic to $S^1\times \{(0,0)\}$, oriented by the variable $\theta$. For $q\ge 2$,
if a $(p,q)$-torus knot is isotopic to a $(p',q)$-torus knot in $J^1(S^1)$, then $p=p'$.
This can be seen as follows. Let $K$ and $K'$ be a $(p,q)$-torus knot and a $(p',q)$-torus knot
in $J^1(S^1)$, respectively. Embed $J^1(S^1)$ into $S^3$ as an open tubular
neighborhood of an unknot in $S^3$. Then $K$ and $K'$ become a $(p+cq,q)$-torus knot and a $(p'+cq,q)$-torus
knot in $S^3$ for some $c\in\Z$. Using different framings of the unknot to define the embedding,
$c$ can be any integer. Thus if $K$ and $K'$ are isotopic in $J^1(S^1)$, then the corresponding
$(p+cq,q)$-torus knot and $(p'+cq,q)$-torus knot are isotopic in $S^3$ for each $c\in \Z$.
Thus by the classification of torus knots in $S^3$, we have $p=p'$.

For an oriented Legendrian knot $K$ in a contact $3$-manifold,
we have a positive stabilization $S_+(K)$ and a negative stabilization $S_-(K)$ (cf. \cite{EH}).
Stabilizations are well defined and commute with each other. For an oriented Legendrian knot $K$ in $J^1(S^1)$,
we have $\tb(S_{\pm}(K))=\tb(K)-1$, $\rot(S_{\pm}(K))=\rot (K)\pm 1$.
Notice that the stabilization affects the classical invariants in this way for any oriented Legendrian knot $K$
in any contact $3$-manifold, as long as the classical invariants can be defined (cf. \cite{EH}).
The results in the following three paragraphs can be deduced from \cite[Proof of Theorem 3.3]{DG1}.

The maximum Thurston-Bennequin invariant of a Legendrian knot in $J^1(S^1)$ isotopic to $S^1\times\{(0,0)\}$ is $0$.
Any Legendrian knot isotopic to $S^1\times\{(0,0)\}$ with $\tb=0$ is Legendrian isotopic to
$S^1\times\{(0,0)\}$. A Legendrian knot in $J^1(S^1)$ isotopic to $S^1\times\{(0,0)\}$ with non-maximum $\tb$ can be destabilized in $J^1(S^1)$.

For $p\ge 1$ and $q\ge 2$,  the maximum Thurston-Bennequin invariant of a Legendrian $(p,q)$-torus
knot in $J^1(S^1)$ is $p(q-1)$. Any two Legendrian $(p,q)$-torus knots with maximum Thurston-Bennequin
invariant are Legendrian isotopic. A Legendrian $(p,q)$-torus knot in $J^1(S^1)$ with non-maximal $\tb$ can be destabilized in $J^1(S^1)$.

For $p<0$ and $q\ge 2$, the maximum Thurston-Bennequin invariant of a Legendrian $(p,q)$-torus knot
in $J^1(S^1)$ is $pq$. The possible values of $\rot$ (for $\tb=pq$ being maximum) are shown to lie
in $\{ \pm(p+2lq):l\in\Z,0\le l<-\frac{p}{q}\}$. A Legendrian $(p,q)$-torus knot in $J^1(S^1)$
with non-maximal $\tb$ can be destabilized in $J^1(S^1)$.

By \cite[Theorem 3.3]{DG1}, two oriented Legendrian torus knots in $J^1(S^1)$
are Legendrian isotopic if and only if their oriented knot types and their classical invariants $\tb$ and $\rot$ agree.

\subsection{Legendrian torus knots in a solid torus}

Let $V$ be an oriented solid torus. Let $m\in H_1(\partial V)$ be the class of an oriented meridian
of $V$ and $l\in H_1(\partial V)$ the class of an oriented longitude of $V$. The meridian and the
longitude are oriented in such a way that $m,l$ form a positive basis for $H_1(\partial V)$.
By a torus knot in $V$, we mean a knot in $\Int(V)$ that sits on a torus parallel to $\partial V$
(i.e. this torus and $\partial V$ bound a thickened torus in $V$). For $p,q$ coprime, a $(p,q)$-torus knot in $V$
is an oriented knot in $\Int (V)$ that sits on a torus $T$ parallel to $\partial V$
such that this oriented knot is homologically equivalent to $pm+ql$ in the thickened torus bounded by $T$ and $\partial V$.

Similar to the cases in $J^1(S^1)$, we have: a $(\pm 1,0)$-torus
knot in $V$ is trivial; a $(p,1)$-torus knot in $V$ is isotopic to
a core of $V$; for $q\ge 2$, if a $(p,q)$-torus knot in $V$ is
isotopic to a $(p',q)$-torus knot in $V$, then $p=p'$. For a
$(p,q)$-torus knot $K$ with $q\ge 2$, the framing of $K$ induced
by a torus $T$ parallel to $\partial V$ on which $K$ sits is
independent of the torus $T$ we choose. This can be seen by
embedding $V$ in $S^1\times S^2$ and using
Proposition~\ref{framing}.

Now let $\xi$ be a positive tight contact structure on $V$ with
convex boundary having two dividing curves each in the homology
class $l$. For a Legendrian $(p,q)$-torus knot $K$ in $(V,\xi)$
with $q\ge 2$, we define the twisting number $\tw(K)$ to be the
number of counterclockwise (right) $2\pi$ twists of $\xi$ along
$K$, relative to the framing of $K$ induced by a torus $T$
parallel to $\partial V$ on which $K$ sits. Since $\xi$ is trivial
as an abstract real $2$-plane bundle, using a nowhere zero vector
field in $\xi$, we can define the rotation number of an oriented
Legendrian knot in $(V,\xi)$.

Let $L$ be an oriented Legendrian core of $(V,\xi)$ such that $l$
is the class of a parallel copy of $L$ determined by the contact
framing. Such a Legendrian core exists by \cite[Theorem 3.14]{EH}.
Furthermore, by \cite[Theorem 3.14]{EH}, we have a contact
embedding $\phi$ from $(V,\xi)$ to $(J_1(S^1),\xi_0)$ whose image
is $M_1$ (possibly perturbing $\partial V$) and sending $l$ to
$l_1$.

For a Legendrian $(p,q)$-torus knot $K$ in $(V,\xi)$ with $q\ge
2$, $\phi(K)$ is a Legendrian $(p,q)$-torus knot in
$(J_1(S^1),\xi_0)$, and $\tw(K)=\tb(\phi(K))-pq$. Using the
nowhere vanishing vector field in $(V,\xi)$, which is sent to
$\partial_{y}$ by $\mathrm{d}\phi$, to define the rotation number
of $K$, we have $\rot(K)=\rot(\phi(K))$.

The following proposition is essentially contained in \cite{EH}, and can be easily derived from Subsection 3.1 by a contact flow.

\begin{proposition} \label{classification in a solid torus}
With $V,\xi,L$ as above,
\begin{enumerate}
\item for $p,q$ coprime and $q\ge 2$, two Legendrian $(p,q)$-torus knots in $(V,\xi)$ are Legendrian isotopic
if and only if their classical invariants (twisting number and rotation number) agree;
\item an oriented Legendrian knot in $(\Int(V),\xi)$ isotopic to $L$ is Legendrian isotopic to a stabilization of $L$.
\end{enumerate}
\end{proposition}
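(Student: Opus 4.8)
The plan is to transport the problem to $J^1(S^1)$ through the contact embedding $\phi\colon (V,\xi)\to (J^1(S^1),\xi_0)$ with image $M_1$, apply the classification results recalled in Subsection~3.1, and then transport the resulting Legendrian isotopies back into $\Int(V)$. The forward implication of (1) is immediate, since $\tw$ and $\rot$ are Legendrian isotopy invariants. For the converse, suppose $K$ and $K'$ are Legendrian $(p,q)$-torus knots in $(V,\xi)$ with $\tw(K)=\tw(K')$ and $\rot(K)=\rot(K')$. Then $\phi(K),\phi(K')$ are Legendrian $(p,q)$-torus knots in $(J^1(S^1),\xi_0)$, and the relations $\tb(\phi(K))=\tw(K)+pq$ and $\rot(\phi(K))=\rot(K)$, together with the corresponding ones for $K'$, show that $\phi(K)$ and $\phi(K')$ have equal $\tb$ and equal $\rot$. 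By \cite[Theorem 3.3]{DG1} they are Legendrian isotopic in $J^1(S^1)$.

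The main point is then to promote this Legendrian isotopy to one lying inside $\phi(\Int(V))=\Int(M_1)$, so that it can be pulled back by $\phi^{-1}$. Here I would use the contact flow given by the scalings $\sigma_\lambda(\theta,y,z)=(\theta,\lambda y,\lambda z)$ for $\lambda>0$, which satisfy $\sigma_\lambda^*(\mathrm{d}z-y\,\mathrm{d}\theta)=\lambda(\mathrm{d}z-y\,\mathrm{d}\theta)$ and are therefore contactomorphisms of $(J^1(S^1),\xi_0)$; they send $M_1$ onto $M_\lambda=\{y^2+z^2\le\lambda^2\}$ and exhaust $J^1(S^1)$ as $\lambda\to\infty$. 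Given a Legendrian isotopy $\Phi_t$ in $J^1(S^1)$ from $\phi(K)$ to $\phi(K')$, its image is compact and hence contained in $\Int(M_\Lambda)$ for some $\Lambda\ge 1$; then $\sigma_{1/\Lambda}\circ\Phi_t$ is a Legendrian isotopy inside $\Int(M_1)$ from $\sigma_{1/\Lambda}(\phi(K))$ to $\sigma_{1/\Lambda}(\phi(K'))$. Concatenating with the scaling paths $\lambda\mapsto\sigma_\lambda(\phi(K))$ and $\lambda\mapsto\sigma_\lambda(\phi(K'))$ for $\lambda\in[1/\Lambda,1]$, which remain in $\Int(M_1)$ because $\phi(K),\phi(K')\subset\Int(M_1)$ and $\sigma_\lambda(\Int(M_1))\subseteq\Int(M_1)$ for $\lambda\le 1$, yields a Legendrian isotopy from $\phi(K)$ to $\phi(K')$ entirely within $\Int(M_1)$. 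Pulling it back by $\phi^{-1}$ proves (1).

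For (2), let $K$ be a Legendrian knot in $(\Int(V),\xi)$ isotopic to $L$. Then $\phi(K)\subset\Int(M_1)$ is isotopic to $\phi(L)$, which is isotopic to the core $S^1\times\{(0,0)\}$ of $M_1$; moreover, since the contact framing of $L$ corresponds to $l$ and $\phi$ sends $l$ to $l_1$, the knot $\phi(L)$ has contact framing $l_1$, hence maximal $\tb=0$, and so is Legendrian isotopic to $S^1\times\{(0,0)\}$ by the results of Subsection~3.1. By the destabilization results recalled there, $\phi(K)$ is Legendrian isotopic in $J^1(S^1)$ to a stabilization $S(S^1\times\{(0,0)\})$, hence to $S(\phi(L))$. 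Since stabilization is a local operation commuting with the contact embedding $\phi$ (with matching signs, as $\phi$ preserves orientation and coorientation), we have $S(\phi(L))=\phi(S(L))$ up to Legendrian isotopy, where $S(L)$ is the corresponding stabilization of $L$, which can be arranged to lie in $\Int(V)$. Applying the same contact-flow transfer argument to the Legendrian isotopy between $\phi(K)$ and $\phi(S(L))$ confines it to $\Int(M_1)$ and pulls it back to a Legendrian isotopy in $\Int(V)$ between $K$ and $S(L)$, proving (2).

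I expect the only genuine difficulty to be this transfer step: a Legendrian isotopy produced in $J^1(S^1)$ need not respect the subset $M_1=\phi(V)$, and the scaling contact flow $\sigma_\lambda$ is exactly the device that confines it to $\Int(M_1)$ while moving the interior endpoints only by an explicit scaling isotopy. Everything else reduces to bookkeeping with the invariants via $\tb=\tw+pq$ and $\rot=\rot\circ\phi$, and to the $J^1(S^1)$ classification already recalled in Subsection~3.1.
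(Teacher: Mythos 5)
Your proof is correct and is exactly the argument the paper has in mind: the paper's entire justification is that the proposition ``is essentially contained in [EH], and can be easily derived from Subsection 3.1 by a contact flow,'' and your scaling contactomorphisms $\sigma_\lambda$ are precisely such a contact flow, used to confine the Legendrian isotopies produced in $(J^1(S^1),\xi_0)$ to $\Int(M_1)$ before pulling back by $\phi^{-1}$. You have simply supplied the details (invariant bookkeeping via $\tb=\tw+pq$ and $\rot=\rot\circ\phi$, the compactness/rescaling trick, and the locality of stabilization) that the paper leaves to the reader.
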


\section{Legendrian torus knots in $S^1\times S^2$}

First, we prove the main results.

\begin{proof}[Proof of Theorem~\ref{S^1 knot}] Let $K$ and $K'$ be two oriented
Legendrian knots in $(S^1\times S^2,\xi_{\st})$ which have the
same oriented knot type as $S^1\times \{ (0,0,1)\}$ and the same
rotation number. The knot $K$ has a tubular neighborhood $N$ with
convex boundary having two dividing curves each in the homology
class $\lambda$, where $\lambda$ is the class in $H_1(\partial N)$
of a parallel copy of $K$ determined by the contact framing.
Similarly, the knot $K'$ has a tubular neighborhood $N'$ with
convex boundary having two dividing curves each in the homology
class $\lambda'$, where $\lambda'$ is the class in $H_1(\partial
N')$ of a parallel copy of $K'$ determined by the contact framing.
This allows one to find a contactomorphism $\phi:N\to N'$ sending
$K$ to $K'$ and $\lambda$ to $\lambda'$ (and sending a meridian of
$N$ to a meridian of $N'$). Since a meridian of $N$ (respectively,
$N'$) is also a meridian of $(S^1\times S^2)\backslash \Int(N)$
(respectively, $(S^1\times S^2)\backslash \Int(N')$), $\phi$ can
be extended to a diffeomorphism of $S^1\times S^2$. Furthermore,
by Theorem 3.14 of \cite{EH}, $\phi$ can be extended to a
contactomorphism, still denoted by $\phi$, of $(S^1\times
S^2,\xi_{\st})$.

In \cite{DG3}, we have a contactomorphism $r_c$ of $(S^1\times S^2,\xi_{\st})$
isotopic to the diffeomorphism $r$ (for the definition of $r$, see the proof of Proposition \ref{topological classification})
such that for the oriented Legendrian knot $K_0$, which is shown in Figure~\ref{fig:2},
in $(S^1\times S^2,\xi_{\st})$, $r_c(K_0)$ is Legendrian isotopic to the positive stabilization $S_+(K_0)$ of $K_0$.
In particular, $\rot (r_c(K_0))=\rot (K_0)+1$. Thus for an oriented Legendrian
knot $K_1$ in $(S^1\times S^2,\xi_{\st})$ homotopic to $q$ times the standard generator
of the fundamental group $\pi_1(S^1\times S^2)\cong\Z$, we have $\rot(r_c(K_1))=\rot(K_1)+q$.
According to \cite{DG3}, any contactomorphism of $(S^1\times S^2,\xi_{\st})$ acting trivially on
homology is contact isotopic to a uniquely determined integer power of $r_c$. So $\phi$ is contact
isotopic to $r_c^m$ for some integer $m$. Hence $\rot (K')=\rot (K)+m$. Since $\rot (K)=\rot(K')$,
we conclude that $m=0$ and thus $\phi$ is contact isotopic to the identity. Thus $K$ and $K'$ are Legendrian isotopic.
\end{proof}

\begin{figure}[htb]
\begin{overpic}
{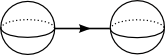}

\end{overpic}
\caption{The Legendrian knot $K_0$ in $(S^1\times S^2,\xi_{\st})$.}
\label{fig:2}
\end{figure}

\begin{proof}[Proof of Theorem~\ref{torus knot}] Perturb $T_0$ to be a convex torus $T_0'$
with two dividing curves each in the homology class $l_0'$, where
$l_0'\in H_1(T_0')$ corresponds to $l_0\in H_1(T_0)$ under the
perturbation. Let $m_0'\in H_1(T_0')$ denote the class
corresponding to $m_0\in H_1(T_0)$ under the perturbation. For the
notation $T_0,l_0,m_0$, see Section 1. Let $N_0$ and $N_1$ denote
the closures of the components of $(S^1\times S^2)\backslash T_0'$
containing $S^1\times \{ (0,0,1)\}$ and $S^1\times\{ (0,0,-1)\}$,
respectively. Let $K_0$ (respectively, $K_1$) be an oriented
Legendrian core of $N_0$ (respectively, $N_1$) such that $l_0'$ is
the class of a parallel copy of $K_0$ (respectively, $K_1$)
determined by the contact framing. One may consider $K_0$ as the
Legendrian knot $K_0$ shown in Figure~\ref{fig:2} and $K_1$ as a
Legendrian push-off of $K_0$ along the vertical direction. In
particular, $\rot (K_0)=\rot (K_1)$.

Let $K_2$ be a Legendrian knot in $(S^1\times S^2,\xi_{\st})$ which
sits on a Heegaard torus $T$. Denote the closures of the components of
$(S^1\times S^2)\backslash T$ by $N,N'$. Let $K_2'$ be an oriented Legendrian
core of $N'$ such that $K_2'$ is isotopic to $S^1\times \{ (0,0,1)\}$ in $S^1\times S^2$
as oriented knots. Stabilize $K_2'$ if necessary to make $\rot(K_2')=\rot(K_1)$.
By Theorem \ref{S^1 knot}, $K_2'$ is Legendrian isotopic to $K_1$. Thus we may assume
that $K_2$ is in $(S^1\times S^2)\backslash K_1$. Since $(S^1\times S^2)\backslash K_1$
is contactomorphic to $(J^{1}(S^{1}), \xi_0)$, cf. \cite[Lemma 7]{DG3},
using a contact flow, we may push $K_2$ into $\Int (N_0)$.

Now let $K$ and $K'$ be two oriented Legendrian torus knots in $(S^1\times S^2,\xi_{\st})$
which have the same oriented knot type and classical invariants $\tw$ and $\rot$.
By the preceding paragraph, we may assume that $K$ and $K'$ are Legendrian torus knots
in $N_0$ with the same invariants $\tw$ and $\rot$.  Use $m_0',l_0'$ to define $(p,q)$-torus
knots in $N_0$. Then a $(p,q)$-torus knot in $N_0$ is also a $(p,q)$-torus knot
in $S^1\times S^2$. Without loss of generality, we may assume that $K$ is a
Legendrian $(p,q)$-torus knot in $N_0$ and $K'$ is a Legendrian $(p',q)$-torus
knot in $N_0$ with $q\ge 2$. By Proposition \ref{topological classification}, $p'\equiv p\mod 2q$ or $p'\equiv -p\mod 2q$.

If $p'\equiv p\mod 2q$, then by interchanging the roles of $K$ and $K'$ if necessary,
we may assume that $p'=p+2kq$, where $k$ is a non-negative integer. There is a contactomorphism $g$
of $(S^1\times S^2,\xi_{\st})$ which sends $K_0$ to $S_+S_-(K_0)$ and is contact isotopic to the
identity (cf. the proof of Theorem \ref{S^1 knot}). We may assume that $g$ sends $N_0$
into $\Int (N_0)$. The class of a parallel copy of $S_{+}S_{-}(K_{0})$ determined by the contact framing is
$l_0'-2m_0'$. Then $g$ sends a $(p_0,q)$-torus knot (corresponding to $p_{0}m_0'+ql_0'$)
in $N_0$ to a $(p_{0}-2q,q)$-torus knot (corresponding to $p_0m_0'+q(l_0'-2m_0')=(p_0-2q)m_0'+ql_0'$)
in $N_0$. Hence $g^k(K')$ is a $(p,q)$-torus knot in $N_0$.
By Proposition \ref{classification in a solid torus} (1), $g^k(K')$ and $K$
are Legendrian isotopic in $ (N_0,\xi_{\st})$. Thus $K$ and $K'$ are Legendrian isotopic in $(S^1\times S^2,\xi_{\st})$.

If $p'\equiv -p\mod 2q$, let $T'$ be a torus in $\Int (N_0)$ parallel to $\partial N_0$
on which $K'$ sits. Let $N_0'$ be the solid torus in $\Int (N_0)$ which has boundary $T'$.
Let $K_0'$ be an oriented Legendrian core of $N_0'$ such that $K_0'$ is isotopic to
$S^1\times \{ (0,0,1)\}$ in $S^1\times S^2$ as oriented knots. Stabilize $K_0'$ if necessary
to make $\rot (K_0')=\rot (K_0)=\rot (K_1)$. By Proposition \ref{classification in a solid torus} (2),
$K_0'$ is Legendrian isotopic to $S_ +^kS_-^k(K_0)$ in $(N_0,\xi_{\st})$, where $k$ is a
non-negative integer. There is a contactomorphism $h$ of $(S^1\times S^2,\xi_{\st})$ which
sends $K_0'$ to $K_1$ and is contact isotopic to the identity (cf. the proof of
Theorem \ref{S^1 knot}). Using a contact flow in $(S^1\times S^2)\backslash K_1$,
we may assume that $h(K')$ is in $\Int (N_0)$. Since $K_0'$ is Legendrian isotopic
to $S_{+}^{k} S_{-}^{k}(K_{0})$ in $(N_0,\xi_{\st})$, the class of a parallel copy
of $K_0'$ determined by the contact framing is $l_0'-2km_0'$. Note that a meridian of $K_1$
corresponds to $-m_0'$. Since $h$ sends $K_0'$ to $K_1$ and is a contactomorphism, $h$
sends $m_0'$ to $-m_0'$ and sends $l_0'-2km_0'$ to $l_0'$, thus sends $l_0'$ to $l_0'-2km_0'$,
and hence sends the $(p',q)$-torus knot $K'$ (corresponding to $p'm_0'+ql_0'$) to a $(-p'-2kq,q)$-torus
knot (corresponding to $p'(-m_0')+q(l_0'-2km_0')=(-p'-2kq)m_0'+ql_0'$) in $N_0$.
By the preceding paragraph, $h(K')$ and $K$ are Legendrian isotopic in $(S^1\times S^2,\xi_{\st})$.
Thus $K$ and $K'$ are Legendrian isotopic in $(S^1\times S^2,\xi_{\st})$.
\end{proof}

\begin{figure}[htb]
\begin{overpic}
{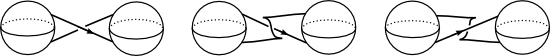}
\put(60, -5){$L_{0}$}
\put(370, -5){$L_{1}$}
\put(220, -5){$L_{-1}$}
\end{overpic}
\caption{Three Legendrian torus knots in $(S^1\times S^2,\xi_{\st})$.}
\label{fig:3}
\end{figure}

\begin{figure}[htb]
\begin{overpic}
{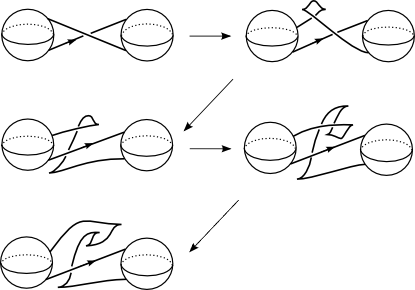}
\end{overpic}
\caption{A Legendrian isotopy.}
\label{fig:4}
\end{figure}

\begin{example}
As described in \cite[Section 2]{Go}, one can represent
$(S^1\times S^2,\xi_{\st})$ by the Kirby diagram with one
$1$-handle in the standard contact structure on $S^3$. We define
the rotation number of an oriented Legendrian knot in $(S^1\times
S^2,\xi_{\st})$ as described in \cite[p. 635]{Go}. In
Figure~\ref{fig:3}, $L_0$ is a Legendrian $(2,1)$-torus knot with
$\rot(L_0)=0$ and $\tw (L_0)=-1$, $L_{-1}$ is a Legendrian
$(2,-1)$-torus knot with $\rot (L_{-1})=-1$ and $\tw (L_{-1})=0$,
and $L_1$ is a Legendrian $(2,-1)$-torus knot with $\rot (L_1)=1$
and $\tw (L_1)=0$. By Proposition \ref{topological
classification}, $L_0,L_{-1},L_1$ are isotopic in $S^1\times S^2$.
Furthermore, by Theorem \ref{torus knot}, $L_0$ is Legendrian
isotopic to $S_+(L_{-1})$ and $S_-(L_1)$. An explicit Legendrian
isotopy between $L_0$ and $S_-(L_1)$ is shown in
Figure~\ref{fig:4}. In the second step of this Legendrian isotopy,
we perform a move of type $6$ from \cite[Theorem 2.2]{Go}.
\end{example}

We give some propositions on the invariants of Legendrian
$(p,q)$-torus knots, $q\geq 2$, in $(S^1\times S^2, \xi_{\st})$.

\begin{proposition}
For a Legendrian $(p,q)$-torus knot, $q\geq2$, in $(S^1\times S^2,
\xi_{\st})$, the maximal $\tw$ invariant is $0$.
\end{proposition}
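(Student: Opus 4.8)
The plan is to transport the problem into the solid torus $N_0$ and then into $J^1(S^1)$, where the maximal Thurston--Bennequin invariant is already known, while using the contactomorphism $g$ to adjust the representation so that its first coordinate becomes negative.

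First I would recall from the proof of Theorem~\ref{torus knot} that any Legendrian $(p,q)$-torus knot $K$ in $(S^1\times S^2,\xi_{\st})$ with $q\ge 2$ can be pushed, by a contact isotopy, into $\Int(N_0)$, where it becomes a Legendrian $(p,q)$-torus knot in the tight solid torus $(N_0,\xi_{\st})$. Since the framing of $K$ induced by a Heegaard torus agrees with the framing induced by a torus parallel to $\partial N_0$ (as noted in Subsection~3.2 via Proposition~\ref{framing}), the twisting number $\tw(K)$ computed in $S^1\times S^2$ equals the twisting number computed in $N_0$.

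Next I would invoke the contactomorphism $g$ of $(S^1\times S^2,\xi_{\st})$ from the proof of Theorem~\ref{torus knot}, which is contact isotopic to the identity and sends a $(p_0,q)$-torus knot in $N_0$ to a $(p_0-2q,q)$-torus knot in $N_0$. Applying $g^k$ for $k$ large, I obtain a Legendrian knot $g^k(K)$, Legendrian isotopic to $K$ and hence with the same twisting number, which sits in $N_0$ as a $(p',q)$-torus knot with $p'=p-2kq<0$. The contact embedding $\phi\colon(N_0,\xi_{\st})\to(J^1(S^1),\xi_0)$ of Subsection~3.2 then gives $\tw(K)=\tw(g^k(K))=\tb(\phi(g^k(K)))-p'q$. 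Because $p'<0$ and $q\ge 2$, the maximal Thurston--Bennequin invariant of a Legendrian $(p',q)$-torus knot in $J^1(S^1)$ is $p'q$ (recalled in Subsection~3.1), so $\tb(\phi(g^k(K)))\le p'q$ and therefore $\tw(K)\le 0$.

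Finally, to see that the value $0$ is attained, I would fix some $p'<0$ with $p'\equiv p\bmod 2q$ (so that the resulting knot stays in the same smooth class by Proposition~\ref{topological classification}, and automatically $\gcd(p',q)=1$), take the Legendrian $(p',q)$-torus knot in $J^1(S^1)$ realizing the maximal $\tb=p'q$, and pull it back through $\phi^{-1}$ to a Legendrian $(p',q)$-torus knot in $N_0$; the formula then yields $\tw=p'q-p'q=0$. The main obstacle here is conceptual rather than computational: a naive estimate from a positive representation $p>0$ would give only the bound $\tw\le p(q-1)-pq=-p$, so the crux is to recognize that the first coordinate of a Legendrian torus knot in $N_0$ is not an invariant of its Legendrian isotopy class in $S^1\times S^2$, and to use $g$ to pass to a negative first coordinate, where the relevant maximal-$\tb$ value is $p'q$ rather than $p(q-1)$.
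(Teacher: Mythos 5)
Your proposal is correct and takes essentially the same approach as the paper: push the knot into $N_0$, use the embedding into $(J^1(S^1),\xi_0)$ and the known maximal $\tb$ values to bound $\tw$, and realize $\tw=0$ by a representative with negative first coordinate $p'\equiv p \bmod 2q$. The only (minor) difference is that the paper treats both signs directly---if $p'>0$ then $\tw\le -p'<0$, and if $p'<0$ then $\tw\le 0$, so no normalization is needed for the upper bound---whereas you first apply powers of the contactomorphism $g$ to force $p'<0$; both arguments are valid, and the role of the negative representative is genuinely essential only for the attainment of the value $0$.
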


\begin{proof}
According to the proof of Theorem~\ref{torus knot}, we can push a
Legndrian $(p,q)$-torus knot in $(S^1\times S^2, \xi_{\st})$ into
a Legendrian $(p',q)$-torus knot in $(N_0, \xi_{\st})$. According
to Section~3, if $p'>0$, then it has $\tw\leq -p'<0$, and if
$p'<0$, then it has $\tw\leq 0$. Note that the $\tw$ invariant of
a Legndrian $(p,q)$-torus knot in $(S^1\times S^2, \xi_{\st})$
coincides with that of its push-off in $(N_0, \xi_{\st})$. So the
maximal $\tw$ invariant of a Legendrian $(p,q)$-torus knot in
$(S^1\times S^2, \xi_{\st})$ is nonpositive. On the other hand,
there exists a Legendrian $(p',q)$-torus knot in $(N_0,
\xi_{\st})$, with $p'\equiv p\mod 2q$ and $p'<0$, which has $\tw$
invariant $0$. So the proposition holds.
\end{proof}

\begin{proposition}
For a Legendrian $(p,q)$-torus knot, $q\geq2$, in $(S^1\times S^2,
\xi_{\st})$, with the maximal $\tw$ invariant, it has $\rot\in\{
\pm p+2dq: d\in \Z\}$.
\end{proposition}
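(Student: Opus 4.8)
The plan is to reduce to the solid torus $N_0$ and invoke the $J^1(S^1)$ computations recalled in Section~3. First I would, exactly as in the proof of Theorem~\ref{torus knot}, use a contact flow to push the given Legendrian $(p,q)$-torus knot $K$ in $(S^1\times S^2,\xi_{\st})$ to a Legendrian $(p',q)$-torus knot $K'$ in $(N_0,\xi_{\st})$. Since a contact flow is a Legendrian isotopy, both $\tw$ and $\rot$ are preserved, so $\tw(K')=\tw(K)=0$ and $\rot(K')=\rot(K)$. By Proposition~\ref{topological classification} the parameter $p'$ satisfies $p'\equiv p\mod 2q$ or $p'\equiv -p\mod 2q$.

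Next I would pin down the sign of $p'$. Using the contact embedding $\phi$ of $(N_0,\xi_{\st})$ into $(J^1(S^1),\xi_0)$ from Section~3, we have $\tw(K')=\tb(\phi(K'))-p'q$ and $\rot(K')=\rot(\phi(K'))$. If $p'>0$, the maximal Thurston--Bennequin invariant $p'(q-1)$ of a $(p',q)$-torus knot in $J^1(S^1)$ forces $\tw(K')\le -p'<0$, contradicting $\tw(K')=0$. Hence $p'<0$ (note $p'\neq 0$, since $\gcd(p',q)=1$ with $q\ge 2$), and then $\tw(K')=0$ forces $\tb(\phi(K'))=p'q$; that is, $\phi(K')$ realizes the maximal $\tb$ among Legendrian $(p',q)$-torus knots in $J^1(S^1)$. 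I would then quote the rotation-number computation for this maximal case recalled in Section~3: for $p'<0$ and $q\ge 2$, a Legendrian $(p',q)$-torus knot in $J^1(S^1)$ with $\tb=p'q$ has $\rot(\phi(K'))\in\{\pm(p'+2lq):l\in\Z,\ 0\le l<-\frac{p'}{q}\}$. Combining this with $\rot(K')=\rot(\phi(K'))$ gives $\rot(K)=\rot(K')\in\{\pm(p'+2lq):l\in\Z\}$.

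Finally, I would translate back to $p$. Writing $p'=\epsilon p+2mq$ with $\epsilon\in\{+1,-1\}$ and $m\in\Z$, we have $\pm(p'+2lq)=\pm(\epsilon p+2(m+l)q)$ for each $l$, and letting the outer sign and the integer $m+l$ vary, this set is exactly $\{\pm p+2dq:d\in\Z\}$. Hence $\rot(K)\in\{\pm p+2dq:d\in\Z\}$, as claimed. The only genuinely delicate point is this last bookkeeping---checking that the sign $\epsilon$ and the mod-$2q$ shift combine so that the $J^1(S^1)$ rotation set lands inside $\{\pm p+2dq\}$---together with the step showing that the maximal value $\tw=0$ forces $p'<0$; the remainder is a direct transcription of the solid-torus results from Section~3.
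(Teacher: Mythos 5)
Your proof is correct and runs on the same machinery as the paper's own: push the knot into the solid torus $N_0$ via the contact-flow argument from the proof of Theorem~\ref{torus knot}, use the invariance of $\tw$ and $\rot$ under this Legendrian isotopy, apply Proposition~\ref{topological classification} to get $p'\equiv\pm p\bmod 2q$, and quote the $J^1(S^1)$ computations recalled in Section~3. The one structural difference is one of direction: the paper's written proof constructs, for arbitrarily large $d'$, explicit maximal-$\tw$ representatives of types $(p-2d'q,q)$ and $(-p-2d'q,q)$ in $(N_0,\xi_{\st})$, thereby establishing the stronger fact that every value in $\{\pm p+2dq: d\in\Z\}$ is realized (``can be, and can only be''), while leaving the containment direction---which is all the proposition actually asserts---rather terse; you instead analyze an arbitrary maximal-$\tw$ knot and prove exactly that containment, spelling out the step (also appearing in the paper's preceding proposition) that $\tw=0$ forces $p'<0$, hence $\tb(\phi(K'))=p'q$ is maximal in $J^1(S^1)$. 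Your final bookkeeping, that $\{\pm(\epsilon p+2dq): d\in\Z\}=\{\pm p+2dq: d\in\Z\}$ for either sign $\epsilon$, is correct, so your argument proves the stated claim; it just does not recover the paper's implicit additional conclusion that all of these rotation numbers actually occur.
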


\begin{proof}
There exists an integer $d'$ such that $\pm p-2d'q<0$. Choose a
Legendrian $(p-2d'q,q)$-torus knot in $(N_0, \xi_{\st})$ which has
maximal $\tw$ invariant. Then, according to Section~3, its
rotation number belongs to $\{ \pm( p-2d'q+2d''q): 0\leq
d''<\frac{-p+2d'q}{q}, d''\in \Z\}$. Choose a Legendrian
$(-p-2d'q,q)$-torus knot in $(N_0, \xi_0)$ which has maximal $\tw$
invariant. Then its rotation number belongs to $\{ \pm(
-p-2d'q+2d''q): 0\leq d''<\frac{p+2d'q}{q}, d''\in \Z\}$. Both of
these two Legendrian knots are Legendrian $(p,q)$-torus knots in
$(S^1\times S^2, \xi_{\st})$. Note that the rotation number of a
Legendrian $(p,q)$-torus knot in $(S^1\times S^2, \xi_{\st})$
coincides with that of its push-off in $(N_0, \xi_{\st})$. Since
$d'$ can be arbitrarily large, the rotation number of a Legendrian
$(p,q)$-torus knot in $(S^1\times S^2, \xi_{\st})$ with maximal
$\tw$ invariant can be, and can only be, any number of $\{\pm
p+2dq: d\in \Z\}$.
\end{proof}

\begin{proposition}
A Legendrian $(p,q)$-torus knot, $q\geq2$, in $(S^1\times S^2, \xi_{\st})$
with non-maximal $\tw$ can be destabilized in $(S^1\times S^2, \xi_{\st})$.
\end{proposition}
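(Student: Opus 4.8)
The plan is to reduce the statement to the corresponding destabilization result for Legendrian torus knots in the solid torus $(N_0,\xi_{\st})$, which is already available from Section~3 via Proposition~\ref{classification in a solid torus}. Following the proof of Theorem~\ref{torus knot}, I would first push the given Legendrian $(p,q)$-torus knot $K$ in $(S^1\times S^2,\xi_{\st})$ into a Legendrian $(p',q)$-torus knot $\widetilde K$ in $(N_0,\xi_{\st})$, using the contact flow in $(S^1\times S^2)\backslash K_1\cong (J^1(S^1),\xi_0)$. The key observation, already noted in the proof of the preceding proposition, is that both the $\tw$ invariant and the $\rot$ invariant of a Legendrian $(p,q)$-torus knot in $(S^1\times S^2,\xi_{\st})$ coincide with those of its push-off in $(N_0,\xi_{\st})$. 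In particular, $K$ has non-maximal $\tw$ in $(S^1\times S^2,\xi_{\st})$ if and only if $\widetilde K$ has non-maximal $\tw$ in $(N_0,\xi_{\st})$.

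Next I would invoke the solid-torus picture. Under the contact embedding $\phi\colon(V,\xi)\to(J^1(S^1),\xi_0)$ from Subsection~3.2, a Legendrian $(p',q)$-torus knot $\widetilde K$ in $(N_0,\xi_{\st})$ corresponds to a Legendrian $(p',q)$-torus knot $\phi(\widetilde K)$ in $(J^1(S^1),\xi_0)$ with $\tw(\widetilde K)=\tb(\phi(\widetilde K))-p'q$. Since $\tw$ and $\tb$ differ by the constant $p'q$, non-maximality of $\tw$ for $\widetilde K$ is equivalent to non-maximality of $\tb$ for $\phi(\widetilde K)$. By the destabilization statements recalled in Subsection~3.1 (for each of the cases $p'>0$, $p'<0$, and the appropriate boundary behaviour), a Legendrian $(p',q)$-torus knot in $J^1(S^1)$ with non-maximal $\tb$ can be destabilized in $J^1(S^1)$; pulling this destabilization back through the contactomorphism $\phi^{-1}$ shows $\widetilde K$ can be destabilized within $(N_0,\xi_{\st})$.

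Finally I would transport the destabilization back to $(S^1\times S^2,\xi_{\st})$. A destabilization of $\widetilde K$ produced inside $\Int(N_0)$ is in particular a destabilization in the ambient manifold, and since the contact flow used to push $K$ into $N_0$ is a Legendrian isotopy, $K$ itself destabilizes in $(S^1\times S^2,\xi_{\st})$. Because stabilization is a local operation that commutes with Legendrian isotopy and with the contact embedding, the destabilizing move of $\widetilde K$ corresponds under the isotopy to a destabilizing move of $K$ with the same sign.

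The main obstacle I anticipate is bookkeeping around the maximal $\tw$ value and its dependence on the sign of $p'$. The preceding proposition already records that the maximal $\tw$ of a $(p,q)$-torus knot in $(S^1\times S^2,\xi_{\st})$ is $0$, whereas in the solid torus the maximal $\tw$ depends on whether $p'>0$ (giving $\tw\le -p'$) or $p'<0$ (giving $\tw\le 0$); so one must check that ``non-maximal $\tw$ in $S^1\times S^2$'' genuinely translates to ``non-maximal $\tb$ in $J^1(S^1)$'' after passing through the chosen push-off representative, rather than merely to non-maximal $\tw$ in that particular solid-torus copy. Care is needed to select the push-off representative $\widetilde K$ (using the freedom $p'\equiv \pm p \bmod 2q$ from Proposition~\ref{topological classification}) so that the $J^1(S^1)$-destabilization result applies cleanly; once the correct representative is fixed, the destabilization itself is immediate from Subsection~3.1.
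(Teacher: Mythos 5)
Your overall route — push the knot into the solid torus $N_0$, transfer to $J^1(S^1)$ via the contact embedding, destabilize there using Subsection~3.1, and observe that a destabilization performed inside $N_0$ is a destabilization in $S^1\times S^2$ — is exactly the paper's route. But the one point you leave unresolved is the entire content of the paper's proof. The biconditional in your first paragraph ("$K$ has non-maximal $\tw$ in $S^1\times S^2$ if and only if $\widetilde K$ has non-maximal $\tw$ in $(N_0,\xi_{\st})$") is false for an arbitrary push-off representative: if the push-off lands as a $(p',q)$-torus knot with $p'>0$, the maximal $\tw$ inside that copy of $N_0$ is $-p'<0$, whereas the ambient maximum is $0$. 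A knot realizing $\tw=-p'$ is then non-maximal ambiently but maximal in $N_0$, and it cannot be destabilized by any isotopy confined to that solid torus, since destabilization raises $\tw$ by $1$ and would violate maximality there. You correctly flag this in your last paragraph as ``the main obstacle,'' but flagging it is not resolving it; as written, your second paragraph's appeal to Subsection~3.1 simply does not apply to such a representative.

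The paper's resolution is a single explicit choice: arrange the push-off so that $p'\equiv p\bmod 2q$ \emph{and} $p'<0$. With $p'<0$ the maximal $\tw$ in $(N_0,\xi_{\st})$ is $0$, equal to the ambient maximum, so non-maximality transfers verbatim and the destabilization in $J^1(S^1)$ applies. Note also that the mechanism for achieving $p'<0$ is not the ``freedom $p'\equiv\pm p\bmod 2q$'' of Proposition~\ref{topological classification} by itself (that is a statement about smooth isotopy classes, not about where a given Legendrian representative can be placed); one needs its Legendrian realization, namely the contactomorphism $g$ from the proof of Theorem~\ref{torus knot}, which is contact isotopic to the identity and carries a $(p_0,q)$-torus knot in $N_0$ to a $(p_0-2q,q)$-torus knot in $N_0$. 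Iterating $g$ drives $p'$ below zero while preserving the Legendrian isotopy class. With that choice made explicit, your argument closes and coincides with the paper's.
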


\begin{proof}
We can push a Legendrian $(p,q)$-torus knot in $(S^1\times S^2,
\xi_{\st})$ to be a Legendrian $(p',q)$-torus knot in $(N_0,
\xi_{\st})$ such that $p'\equiv p\mod 2q$ and $p'<0$. Then the
$\tw$ invariant of the Legendrian $(p',q)$-torus knot is
non-maximal. According to Section~3, we can destabilize it in
$(N_0, \xi_{\st})$. So we can destabilize the Legendrian
$(p,q)$-torus knot in $(S^1\times S^2, \xi_{\st})$.
\end{proof}


\begin{thebibliography}{1dfsgg}

\bibitem[BE]{BE} K. L. Baker and J. B. Etnyre, \emph{Rational linking
and contact geometry}, Perspectives in Analysis, Geometry, and
Topology, Progr. Math. 296 (Birkh\"{a}user, Basel, 2012), 19-37.

\bibitem[DG1]{DG1} F. Ding and H. Geiges, \emph{Legendrian knots and links classified by classical
invariants}, Commun. Contemp. Math. 9 (2007), 135-162.

\bibitem[DG2]{DG2} F. Ding and H. Geiges, \emph{Legendrian helix and
cable links}, Commun. Contemp. Math. 12 (2010), 487-500.

\bibitem[DG3]{DG3} F. Ding and H. Geiges, \emph{The diffeotopy group of
$S^1\times S^2$ via contact topology}, Compositio Math. 146 (2010),
1096-1112.

\bibitem[EF1]{EF1} Y. Eliashberg and M. Fraser, \emph{Classification of
topologically trivial Legendrian knots}, Geometry, Topology, and
Dynamics (Montr\'{e}al, 1995), CRM Proc. Lecture Notes Vol. 15
(Amer. Math. Soc., Providence, 1998), 17-51.

\bibitem[EF2]{EF2} Y. Eliashberg and M. Fraser, \emph{Topologically
trivial Legendrian knots}, J. Symplectic Geom. 7 (2009), 77-127.

\bibitem[Et]{Et} J. B. Etnyre, \emph{Legendrian and transversal knots},
Handbook of Knot Theory (Elsevier, Amsterdam, 2005), 105-185.

\bibitem[EH]{EH} J. B. Etnyre and K. Honda, \emph{Knots and contact
geometry I: Torus knots and the figure eight knot}, J. Symplectic
Geom. 1 (2001), 63-120.

\bibitem[ELT]{ELT} J. B. Etnyre, D. J. LaFountain and B. Tosun,
\emph{Legendrian and transverse cables of positive torus knots}, Geom.
Topol. 16 (2012), no. 3, 1639-1689.

\bibitem[ENV]{ENV} J. B. Etnyre, L. L. Ng and V. V\'{e}rtesi,
\emph{Legendrian and transverse twist knots}, J. Eur. Math. Soc. (JEMS), 15 (2013), no. 3, 969-995.

\bibitem[Ge]{Ge} H. Geiges, \emph{An introduction to contact topology},
Cambridge Studies in Advanced Mathematics, vol. 109 (Cambridge
University Press, Cambridge, 2008).

\bibitem[GO]{GO} H. Geiges and S. Onaran, \emph{Legendrian rational
unknots in lens spaces}, arXiv:1302.3792.

\bibitem[Gh]{Gh} P. Ghiggini, \emph{Linear Legendrian curves in $T^3$},
Math. Proc. Camb. Philos. Soc. 140 (2006), 451-473.

\bibitem[Go]{Go} R. E. Gompf, \emph{Handlebody construction of Stein
surfaces}, Ann. of Math. (2) 148 (1998), 619-693.

\bibitem[Ha]{Ha} A. Hatcher, \emph{Notes on Basic 3-Manifold
Topology}, available at
http://www.math.cornell.edu$/^{\sim}$hatcher.

\bibitem[HR]{HR} C. Hodgson and J. H. Rubinstein, \emph{Involutions and
isotopies of lens spaces}, Knot theory and manifolds (Vancouver,
1983), Lecture Notes in Mathematics, vol. 1144, ed. D. Rolfsen
(Springer, Berlin, 1985), 60-96.

\bibitem[On]{On} S. C. Onaran, \emph{Legendrian knots in lens spaces},
arXiv:1012.3047.


\end{thebibliography}
\end{document}